\definecolor{DimGray}{rgb}{0.41, 0.41, 0.41}
\pgfplotsset{compat=1.15}
\def\sideremark#1{\ifvmode\leavevmode\fi\vadjust{\vbox to0pt{\vss 
      \hbox to 0pt{\hskip\hsize\hskip1em           
 \vbox{\hsize2cm\tiny\raggedright\pretolerance10000
 \noindent #1\hfill}\hss}\vbox to8pt{\vfil}\vss}}} %
\theoremstyle{plain}
\newtheorem{theorem}{Theorem}[section]
\newtheorem{proposition}[theorem]{Proposition}
\newtheorem*{theorem*}{Theorem}
\newtheorem{corollary}[theorem]{Corollary}
\theoremstyle{definition}
\newtheorem{defin}[theorem]{Definition}
\newtheorem{ex}[theorem]{Example}
\theoremstyle{remark}
\newtheorem{rem}[theorem]{Remark}
\def\lcm{\operatorname{lcm}}
\newcommand{\Sg}{\mathbb{S}}
\numberwithin{equation}{section}
\title[Free semigroups and iterated torus knots]{Topological Representations of Free Numerical Semigroups via Iterated Torus Knots}
\author{Patricio Almir\'on}
\author{Adri\'an Olivares-Fern\'andez}
\subjclass[2020]{ 57K10, 57K14}
\keywords{}
\thanks{The first named author is partially funded by MCIN/AEI/10.13039/501100011033 and by ``ERDF -- A way of making Europe", grant PID2022-138906NB-C22. During the course of the investigation the first named author has also been supported by: Spanish Ministerio de Ciencia, Innovaci\'{o}n y Universidades, grant RYC2021-034300-I by MI-CIU/AEI/10.13039/501100011033, by the programm IMAG–Maria de Maeztu grant CEX2020-001105-M / AEI /10.13039/501100011033, as well as Ministerio de Ciencia, Innovaci\'on y Universidades PID2020-114750GB-C32/AEI/10.13039/501100011033.}
\address{Departamento de \'Algebra, An\'alisis Matem\'atico, Geometr\'ia y Topolog\'ia; IMUVA (Instituto de Investigaci\'on en Matem\'aticas), Universidad de Valladolid\\ 
	Paseo de Bel\'en 7\\
	47011 Valladolid, Spain.}
\email{palmiron@uva.es}
\address{Programa de Doctorado en Ciencias, Escuela de Doctorado, Universitat Jaume I, Campus de Riu Sec,  12071.Caste\-ll\'on de la Plana, Spain}
\email{aolivare@uji.es}
\begin{document}
\begin{abstract}
In this paper we will associate a family \(\{K_1,\dots,K_l\}\subset \mathbb{S}^3\) of iterated torus knots to a given free numerical semigroup. We will describe the fundamental group of the knot complement of each knot of the family. 
Finally, we will show that all knots in the family have the same Alexander polynomial and it coincides (up to a factor) with the Poincaré series of the free numerical semigroup. As a consequence, we will provide families of iterated torus knots with the same Alexander polynomial of an irreducible plane curve singularity but which are non-isotopic to its associated knot.
\end{abstract}

\maketitle

\section{Introduction}

The problem of finding families of fibered knots that share the same polynomial invariant is a significant question in knot theory. One of the mainstream invariants is the Alexander polynomial. It was conjectured by Neuwirth and Burde \cite[Problems J and P, pg. 222]{Neu73} that the number of distinct fibered knots with a given Alexander polynomial is finite. However, this conjecture was shown to be false by Morton in 1978 \cite{Morton78}. Since then, Morton's work has inspired several studies providing families of infinitely many knots with the same Alexander polynomial and other invariants as well; see for example \cite{Bonahon,Kanenobu,kauffmanLopes17}. All methods rely in some way or another in the good properties of the Alexander polynomial under the satellization operation. 
\medskip

From this point of view, it is natural to ask to what extent one can assign a finite family of fibered knots to a certain combinatorial object from which the Alexander polynomial can be deduced. In addition, exploring how a purely combinatorial object, such as a free numerical semigroup, can be linked to a family of topological objects, like iterated torus knots, may reveal deeper connections between their inherent properties. Nowadays, the connection between free numerical semigroups and iterated torus knots is focused in some very concrete families \cite{CDGduke,CDG99b,CDG99a,Teragaito22,Wang18}. However, those results share the same idea, which can be summarized as: one semigroup for one knot. Our work, inspired by this idea, goes beyond it and is guided by the following question: for a fixed semigroup, how many knots can be assigned to it in such a way that they share the same Alexander polynomial?
\medskip

 A numerical semigroup \(S\subset \mathbb{N}\) is a finitely generated additive submonoid of the natural numbers with finite complement. If \(S\) is minimally generated by the set \(\{a_0,\dots,a_g\}\) then we will say that \(S\) is free if 
 \begin{equation*}
    \lcm(\gcd(a_0,\dots,a_{i-1}), a_i)\in \langle a_0, \dots, a_{i-1} \rangle \ \text{for all }   i\in\{1,\dots, g \}
\end{equation*}
for some ordering of the minimal set of generators. In particular, it may happen that there exist different possible orderings for which \(S\) is free. In this paper, our main goal is to construct a family of iterated torus knots \(\{K_1, \dots, K_l\} \subset \mathbb{S}^3\) whose cable invariants are computed from the minimal set of generators of \(S\), i.e. the cable invariants are \(\{e_{i-1}/e_i,a_i/e_i\}_{i=1,\dots,g}\) where \(e_i=\gcd(a_0,\dots,a_{i})\) for a fixed ordering. Obviously, this construction depends on the ordering of the minimal generators. Therefore, for a fixed semigroup \(S,\) it may produce different isotopy classes of iterated torus knots, see Example \ref{ex:firstexample}. Our first main result, Theorem \ref{teo:iterated_knot_fundamental_group} provides the construction of each iterated torus knot \(K_i\) and a presentation of its fundamental group:
\begin{theorem*}\ref{teo:iterated_knot_fundamental_group}
Let \(S=\langle a_{r_0},\dots, a_{r_g}\rangle\) be a free semigroup and let \(\mathcal{K}_S\) be the family of knots associated to \(S\). Let \(\mathcal{G}=\{a_0,\dots,a_{g}\}\in\mathcal{A}\) be an arrangement for which \(S\) is free and let \(K_{\mathcal{G}}\) be its associated knot.
Then, the fundamental group of the knot \(K_{\mathcal{G}}\) admits a presentation with $g+1$ generators and $g$ relations.
    \end{theorem*}
As a consequence, in Definition \ref{def:familyknots} we will provide the precise definition of the family of knots associated to \(S.\) 
\medskip
 
One of the main characteristics of the family of iterated torus knots associated to the free numerical semigroup \(S\) is that they all share the same Alexander polynomial, see Section \ref{sec:alexander}. Our second main result is that the Alexander polynomial of any member of the family coincides, up to a factor, with the Poincaré series of the free numerical semigroup, which in this case is nothing but the generating series of the semigroup \(S:\)
\begin{theorem*}\ref{thm:AlexanderPoincare}
    Let \(S=\langle a_0,\dots,a_g\rangle\) be a free numerical semigroup and let \(\mathcal{K}_S\) be the family of associated knots. Then for all \(K\in\mathcal{K}_S\) we have
    \[\Delta_{K}(t)=P_S(t)(1-t)=(1-t)\left(\sum_{s\in S}t^s\right).\]
    In particular, all knots associated to a free numerical semigroup have the same Alexander polynomial.
\end{theorem*}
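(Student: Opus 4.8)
The plan is to compute both sides of the claimed identity independently and then match them. For the right-hand side I would first exploit freeness to factor the Poincaré series. Since \(S\) is free for the chosen ordering, every \(s\in S\) admits a \emph{unique} expansion \(s=\sum_{i=0}^{g}\lambda_i a_i\) with \(\lambda_0\ge 0\) and \(0\le \lambda_i\le n_i-1\) for \(i\ge 1\), where \(n_i=e_{i-1}/e_i\). Summing the resulting geometric and finite series factorwise gives the product formula
\[
P_S(t)=\sum_{s\in S}t^s=\frac{1}{1-t^{a_0}}\prod_{i=1}^{g}\frac{1-t^{n_i a_i}}{1-t^{a_i}},
\]
so that \((1-t)P_S(t)=\dfrac{1-t}{1-t^{a_0}}\displaystyle\prod_{i=1}^{g}\dfrac{1-t^{n_i a_i}}{1-t^{a_i}}\).

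For the left-hand side I would compute \(\Delta_K\) by the iterated cabling formula. Writing \(K=K_g\) as the knot obtained from the torus knot \(K_1\) by successive cablings with pairs \((p_i,q_i)=(n_i,a_i/e_i)\), the satellite relation \(\Delta_{K_i}(t)=\Delta_{K_{i-1}}(t^{p_i})\,\Delta_{T(p_i,q_i)}(t)\), together with \(\Delta_{T(p,q)}(t)=\frac{(t^{pq}-1)(t-1)}{(t^{p}-1)(t^{q}-1)}\), unfolds after the substitutions \(t\mapsto t^{P_i}\), with \(P_i:=p_{i+1}\cdots p_g\), into
\[
\Delta_K(t)=\prod_{i=1}^{g}\frac{(t^{P_i p_i q_i}-1)(t^{P_i}-1)}{(t^{P_i p_i}-1)(t^{P_i q_i}-1)}.
\]

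The decisive bookkeeping is to identify these exponents with the semigroup data. Since \(P_i=n_{i+1}\cdots n_g=e_i/e_g=e_i\) and \(P_ip_i=P_{i-1}=e_{i-1}\), the factors \(t^{P_i}-1\) in the numerators and \(t^{P_ip_i}-1=t^{P_{i-1}}-1\) in the denominators telescope to \((t-1)/(t^{a_0}-1)\), using \(P_0=n_1\cdots n_g=a_0\) and \(P_g=1\). Meanwhile \(P_iq_i=e_i\cdot(a_i/e_i)=a_i\) and \(P_ip_iq_i=P_{i-1}q_i=e_{i-1}(a_i/e_i)=n_ia_i\), so the surviving factors are exactly \(\prod_i (t^{n_ia_i}-1)/(t^{a_i}-1)\). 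Clearing signs then yields \(\Delta_K(t)=(1-t)P_S(t)\), as claimed.

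I expect the main obstacle to be neither algebraic computation but rather the careful matching of conventions between the topological cabling parameters and the arithmetic of the \(e_i\): one must verify that the family \(\mathcal{K}_S\) of \ref{def:familyknots}, built from the presentation of \ref{teo:iterated_knot_fundamental_group}, is genuinely cabled with longitudinal winding numbers \(n_i\) and with the framing for which the cabling formula takes the stated normalized form, so that \(P_i=e_i\) holds on the nose. The final \emph{same Alexander polynomial} assertion is then conceptual and immediate: although the cable data \((n_i,a_i/e_i)\), the partial gcd's \(e_i\), and hence the intermediate product formula all depend on the chosen free ordering, the value \((1-t)P_S(t)=(1-t)\sum_{s\in S}t^s\) depends only on \(S\); thus every admissible ordering, and every resulting (possibly non-isotopic) knot in \(\mathcal{K}_S\), produces one and the same polynomial.
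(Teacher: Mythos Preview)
Your argument is correct. The computation on both sides checks out: the unique expansion of elements of a free semigroup yields exactly the product form of $P_S(t)$, and the iterated use of the classical satellite formula $\Delta_{K_i}(t)=\Delta_{K_{i-1}}(t^{n_i})\,\Delta_{T(n_i,a_i/e_i)}(t)$ telescopes to the same product once one identifies $P_i=e_i$, $P_ip_i=e_{i-1}$, $P_iq_i=a_i$, $P_ip_iq_i=n_ia_i$ (using $e_g=1$ and $e_0=a_0$). Your caveat about conventions is well placed but harmless here: the paper constructs the cables by splicing along a standard meridian--longitude pair, and the linking number of the pattern $\widetilde K_i$ with the core of $\widetilde L_i$ is stated to be $n_i$, so the winding number in the satellite formula is indeed $n_i$.

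The paper's own proof reaches the same product formula for $\Delta_K$ by quoting the Eisenbud--Neumann splice-diagram expression
\[
\Delta_K(t)=(1-t)\,\frac{\prod_{i=1}^{g}(1-t^{v_i})}{\prod_{i=0}^{g}(1-t^{w_i})}
\]
and invoking Proposition~\ref{prop:linkinvariants} ($v_i=n_ia_i$, $w_i=a_i$), while the Poincar\'e series identity is simply cited. So the two arguments coincide in their endpoint and differ only in packaging: the paper outsources the Alexander computation to \cite{EN}, whereas you redo it by hand via the cabling formula and an explicit telescoping, and you also supply the (short) proof of the $P_S$ product formula from the Ap\'ery-type decomposition. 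Your route is slightly more elementary and self-contained; the paper's is shorter because the splice calculus already has the telescoping built in.
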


The first prototypical example of a free numerical semigroup appears in the case of irreducible plane curve singularities. If \(\left(C,0\right)\subset \left(\mathbb{C}^2,0\right)\) is a germ of irreducible plane curve singularity then one can canonically associate a knot \(K_C=C\cap \mathbb{S}^3\) and a semigroup \(S_C,\) which is the set of intersection multiplicities with the curve; see \cite{zariski}. In \cite{CDG99a}, Campillo, Delgado and Gusein-Zade discovered the coincidence \(\Delta_{K_C}(t)=(1-t)P_{S_C}(t)\) which in fact implies the coincidence between the Poincaré series and the monodromy zeta function associated to \(C.\) Later on, they continued with several investigations on which Alexander polynomials of links associated to singularities can be regarded as Poincaré series of their ring of functions, see for example \cite{CDGduke,CDG04,CDG15,CDGMathZ}. The knots arising from irreducible plane curve singularities are called algebraic knots. Also, the semigroup \(S_C\) is a particular example of free semigroup \cite{zariski} (see also \ref{sec:algebraicknots}). Therefore, it makes sense to study in detail the family of knots associated to the semigroup \(S_C\)  of an irreducible plane curve. As we show in Section \ref{sec:algebraicknots}, our family brings richness to these connections and introduces a new perspective to the issue. We observe that, in general, our family contains knots which are non-isotopic to the knot of the curve \(K_C\) but share some of their invariants, such as the Alexander polynomial. Therefore, Theorem \ref{teo:iterated_knot_fundamental_group} provides a way to assign other types of knots to an irreducible plane curve in ``a non-canonical way".
\medskip

 Following Campillo, Delgado and Gusein-Zade's result, Wang \cite{Wang18} proposed the study of \(L\)--space knots, a knot that admits an \(L\)--space surgery. Ozsváth and Szabó \cite[Theorem 1.2]{OStopology} showed that for an \(L\)--space knot $\Delta_K(t)=\sum_{i=0}^{2n}(-1)^it^{\alpha_i}.$ From this, Wang \cite{Wang18} defined a formal semigroup, i.e a subset of $\mathbb{Z}_{\geq 0}$ not necessarily closed by addition, $S_K$ using the exponents of the Alexander polynomial that by construction implies the coincidence between \(\Delta_K(t)\) and the generating series of $S_K$. One of the problems with this approach, is that in general \(S_K\) could not be a monoid, i.e. closed by addition of natural numbers, but just a formal semigroup as showed by Wang in \cite[Example 2.3]{Wang18}. However, Wang shows that for any \(L\)--space knot which is an iterated torus knot then its formal semigroup \(S_K\) is a semigroup. As showed in \cite{Wang18}, algebraic knots are \(L\)--space iterated torus knots, but the converse is not true. On the other hand, Teragaito \cite{Teragaito22} showed that there exist hyperbolic \(L\)-space knots for which its formal semigroup is a semigroup. In section \ref{sec:Lspace}, we show that in the family of knots associated to a free semigroup there could exist \(L\)--space knots and non \(L\)--space knots. Moreover, thanks to a result of Hom \cite{Hom11} we can characterize those knots in the family which are \(L\)--space knots (Proposition \ref{prop:charL-space}). Hence, the general situation can be summarized as
 \[\left\{\text{algebraic knots}\right\}\subsetneq \left\{\text{\(L\)--space iterated torus knots}\right\}\subsetneq\left\{\begin{array}{c}
      \text{Family of knots associated}  \\
      \text{to a free numerical semigroup}
 \end{array} \right\}.\]

In conclusion, if \(S\) is a free numerical semigroup, our work provides a systematic study of how many knots can be assigned to it such that they share the same Alexander polynomial. Additionally, we recover well-known cases, such as algebraic knots and \(L\)--space knots, and extend their properties in a systematic way. Thus, we demonstrate the power of applying the combinatorics of a numerical semigroup to other fields, such as knot theory.

\section{The knots associated to a free semigroup}\label{sec:topologyknotfree}

Recall that a numerical semigroup \(S\subset\mathbb{N}\) is an additive submonoid of the natural numbers with finite complement, i.e. \(\mathbb{N}\setminus S\) contains only finitely many elements. Any numerical semigroup is finitely generated, i.e. there exists \(a_0,\dots, a_g\in \mathbb{N}\) such that 
\[S=\langle a_0,\dots, a_g\rangle=a_0\mathbb{N}+\cdots+a_g\mathbb{N}.\]

Following Bertin and Carbone \cite{beca77}, we will say that $S$ is a free semigroup for the arrangement $\{a_0,\dots,a_g\}$ if it satisfies the condition:
\begin{equation}\label{eq:freesemigroup}
	n_ia_i\in \langle a_0, \dots, a_{i-1}\rangle \ \text{for all }   i\in\{1,\dots, g \}
\end{equation}
where $e_0=a_0, \hspace{1mm} e_i=\text{gcd}(a_0,\dots, a_i)$ and $n_i=\frac{e_{i-1}}{e_i}$. A useful observation about this class is that can be constructed in an iterative way. Let us denote by \(S_i=\langle a_0,\dots,a_i\rangle/e_i\) the ``truncated numerical semigroups" of the arrangement. Observe that we have  \(S_2=n_2S_1+a_2/e_2\mathbb{N},\) \(S_3=n_3S_2+a_3/e_3\mathbb{N}\) and thus we can write
\[S=(n_2\cdots n_g)S_1+(n_3\cdots n_g)S_2+\cdots +a_g\mathbb{N}.\]
Therefore, one can see that the pairs of coprime integers \((n_i,a_i/e_i)\) are crucial for this construction. The main goal of this section will be to show that any free semigroup is topologically realizable; see \cite{Acisurvey},\cite{AMknots}. Moreover, we will show that the associated knot is an iterated torus knot with cabling invariants \(\{(n_i,a_i/e_i)\}_{i=1,\dots,g}.\)

\subsection{Iterated torus knots and splice diagrams}

In their 1985 monograph \cite{EN}, Eisenbud and Neumann provided the foundations of a powerful tool used to construct new links from old. This operation, called splice, encodes the particular cases of connected sum, cabling and disjoint sum among others. In this section, we will briefly recall the main properties of this operation and its usefulness in order to work with iterated torus knots. For further details about it we refer to \cite{EN}. For generalities about knots we refer to \cite{BurdeKnots,EN,Murasugibook} and the references therein.


Let $L\subset \Sg^3,L'\subset \Sg^3$ be two links and choose components $S\subset L$ and $S'\subset L'$. Take tubular neighborhoods of the components, namely $V(S)$ and $V(S')$  and choose $M,L\subset \partial V(S)$ and $M',L'\subset \partial V(S')$ topologically standard meridians and longitudes. Form $\Sg^3= (\Sg^3 \smallsetminus \text{int }V(S))\bigcup (\Sg^3 \smallsetminus \text{int }V(S'))$ by pasting boundaries matching $M$ with $L'$ and $L$ with $M'$. The link $(L\smallsetminus S)\cup (L'\smallsetminus S')$ is called the splicing of the links $L$ and $L'$ along $S$ and $S'$, it's denoted by 
$$\xymatrix{ L\ar@{-}[rr]_{S\hspace{1.4cm} S'}& &L'}.$$

Also, the links and the splice operation can be expressed using certain diagrams called ``splice diagrams''. They consist of three main ingredients: 
\begin{enumerate}
	\item[$\bullet$] Nodes of the form \(\begin{tikzpicture}[baseline=-0.5ex]
		\draw (0,0) circle [radius=0.3cm];
		\node at (0,0) {\(\epsilon\)};
	\end{tikzpicture}\) with the symbol \(\epsilon=+,-\). This node represents a Seifert manifold embedded in the link exterior with the orientation given by the corresponding symbol. 
	\item[$\bullet$] Boundary vertices, which are of the form \(\begin{tikzpicture}[baseline=-0.5ex]
		\draw (0,0) -- (1.5,0) node[pos=1, fill=black, circle, minimum size=5pt, inner sep=0pt]{};
	\end{tikzpicture}.\) They correspond to a solid torus representing the tubular neighborhood of some fiber in a Seifert manifold.
	\item[$\bullet$] Arrowhead vertices, which are of the form \(\rightarrow.\) They correspond to link components. 
\end{enumerate}
The edges incident to a node \(\begin{tikzpicture}[baseline=-0.5ex]
	\draw (0,0) circle [radius=0.3cm];
	\node at (0,0) {\(\epsilon\)};
\end{tikzpicture}\) correspond either to boundary components of the Seifert manifold, or to boundaries of the tubular neighborhoods of fibers. We refer to \cite[Chapter II]{EN} for further details and properties. 
\medskip

Along this paper we will be interested in the study of iterated torus knots. Therefore, in order to simplify the exposition we will only recall the splice diagram of that type of knot. Consider the map $f:S^1\to S^1 \times S^1$ given by $f(z)=(z^p, z^q)$; if we identify $\mathbb{S}^1$ with the complex unit circle, the image of $f$ is a closed curve, say $K$, which turns out to be a knot in $\mathbb{S}^3$ obtained by making $q$ winds longitudinally and $p$ winds transversally: this is a torus knot of type $(p,q)$. Obviously, it has only one link component and one Seifert manifold embedded in the link exterior, that can be positively oriented if we consider \(p,q\geq 0\). Figure \ref{fig:diagram_torus_knot} shows the splice diagram of a $(p,q)$-torus knot.
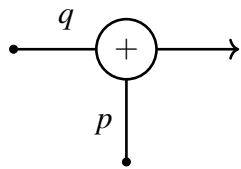
\begin{figure}[H]
	\centering  
	\begin{tikzpicture}
		\draw[line width=1pt] (0,0) circle [radius=0.4cm];
		
		\draw[line width=1pt] (-1.5,0) -- (-0.4,0);
		\draw[line width=1pt, ->] (0.4,0) -- (1.5,0);
		\draw[line width=1pt] (0,-1.5) -- (0,-0.4);
		
		\filldraw (-1.5,0) circle [radius=1.5pt];
		\filldraw (0,-1.5) circle [radius=1.5pt];
		
		\node at (0,0) {$+$};
		\node at (-0.8,0.4) {$q$};
		\node at (-0.3,-1) {$p$};
		
	\end{tikzpicture}
	
	\caption{Splice diagram of  a $(p,q)-$torus knot.}
	\label{fig:diagram_torus_knot}
\end{figure}

Let us now move to the splice operation. Take a knot $K\subset \Sg^3$ and a tubular neighborhood $V(K)$, for $p,q$ coprime integers consider the curve $K(p,q)$; which is the image in $\partial V(K)$ of the function $f$ defined in the previous paragraph. Replacing the knot $K$ by the knot obtained by $K(p,q)$ is called a cabling operation over $K$. We will also say that we have done a $(p,q)$--cable over $K.$ So we will say that a knot is an iterated torus knot if it can be obtained by iterating the cabling process over an unknot. For example, consider the pairs of coprime integers $\{(p_1,q_1),\dots, (p_n,q_n)\}.$ The knot obtained by doing a $(p_n,q_n)$ cable over a $(p_{n-1},q_{n-1})$ cable over a \dots  $(p_2,q_2)$ cable over a $(p_1,q_1)$ cable over an unknot is an iterated torus knot. Also, we will say that the previous knot has cabling invariants $\{ (p_i,q_i)\}_{i=1,\dots n}$. 
The splice diagram of an iterated torus knot with cabling invariants $\{ (p_i,q_i)\}_{i=1,\dots n}$ is the one of Figure \ref{fig:diagram_general_cabling}
\begin{figure}[H]
	\centering
	\begin{tikzpicture}
		
		\draw[line width=1pt] (0,0) circle [radius=0.3cm];
		\draw[line width=1pt] (2.5,0) circle [radius=0.3cm];
		\draw[line width=1pt] (5.5,0) circle [radius=0.3cm];
		
		\draw[line width=1pt] (-1.5,0) -- (-0.3,0);
		\draw[line width=1pt] (0.3,0) -- (2.2,0);
		\draw[line width=1pt] (0,-1.5) -- (0,-0.3);
		\draw[line width=1pt] (2.8,0) -- (3.2,0);
		\draw[line width=1pt,dashed] (3.2,0) -- (4.5,0);
		\draw[line width=1pt] (4.5,0) -- (5.2,0);
		\draw[line width=1pt,->] (5.8,0) -- (7,0);
		\draw[line width=1pt] (2.5,-1.5) -- (2.5,-0.3);
		\draw[line width=1pt] (5.5,-1.5) -- (5.5,-0.3);
		
		\filldraw (-1.5,0) circle [radius=1.5pt];
		\filldraw (0,-1.5) circle [radius=1.5pt];
		\filldraw (2.5,-1.5) circle [radius=1.5pt];
		\filldraw (5.5,-1.5) circle [radius=1.5pt];
		
		\node at (0,0) {$+$};
		\node at (2.5,0) {$+$};
		\node at (5.5,0) {$+$};
		\node at (-0.7,0.3) {$q_1$};
		\node at (-0.3,-0.7) {$p_1$};
		\node at (0.7,0.3) {$1$};
		\node at (1.9,0.3) {$q_2$};
		\node at (2.1,-0.7) {$p_2$};
		\node at (3.2,0.3) {$1$};   
		\node at (4.9,0.3) {$q_n$};
		\node at (5.1,-0.7) {$p_n$};
		\node at (6.2,0.3) {$1$};    
	\end{tikzpicture}
	\caption{Splice diagram of an iterated torus knot.}
	\label{fig:diagram_general_cabling}
\end{figure}
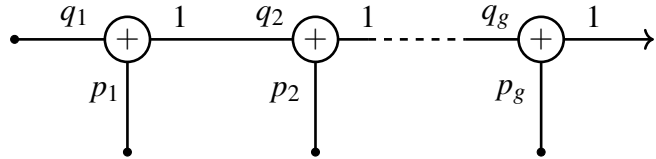

\subsection{The fundamental group of a knot associated to a free semigroup}

Consider a knot $K\subset \Sg^3$, we will denote by $V(K)$ a tubular neighborhood of $K$. The complement of $K$ is defined as the closure of $\Sg^3$ minus a tubular neighborhood of $K$, that is  $\overline{\Sg^3\smallsetminus V(K)}$, which we will denote by $\Sg^3_K$. Moreover, when we talk about the knot group of $K$ we mean the fundamental group of the complement of $K$, namely $\pi_1(\Sg^3_K)$. In order to proceed with our main result, we need first to fix some notation and introduce the notion of a knot associated to a free semigroup. 
\medskip

Let \(S\) be a free numerical semigroup minimally generated by \(\mathcal{G}=\{a_0,\dots,a_g\}.\) Let \(\mathcal{A}\) be the set of all possible arrangements of \(\mathcal{G}\) for which \(S\) is free. For a fixed arrangement, say for simplicity \(\mathcal{G}_i=\mathcal{G}=\{a_{0},\dots,a_{g}\}\in\mathcal{A},\) we already mentioned that the set of coprime integers $\{(n_i,\frac{a_{i}}{e_{i}})\}_{i=1,\dots, g }$ determines a recursive way to construct \(S.\) From this ordered set we can produce an iterated torus knot \(K_{\mathcal{G}}\) associated to the arrangement \(\mathcal{G}\) in the following iterative way. 
\medskip

For the first iteration we consider $K_1$ a $(n_1,\frac{a_1}{e_1})$-torus knot. Let us denote by $\widetilde{K_i}$ the $(n_i,\frac{a_i}{e_i})$-torus knot and by \(\widetilde{L}_i=\mathbb{S}^1\cup \widetilde{K}_i\) the link formed by the unknot and \(\widetilde{K}_i\) with linking number equal to \(n_i;\) its splice diagram is the one in Figure \ref{fig:diagram_auxlink}
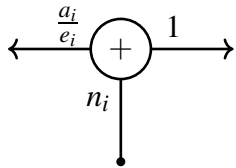
\begin{figure}[H]
	\centering  
	\begin{tikzpicture}
		\draw[line width=1pt] (0,0) circle [radius=0.4cm];
		
		\draw[line width=1pt, ->] (-0.4,0) -- (-1.5,0);
		\draw[line width=1pt, ->] (0.4,0) -- (1.5,0);
		\draw[line width=1pt] (0,-1.5) -- (0,-0.4);
		
		\filldraw (0,-1.5) circle [radius=1.5pt];
		
		\node at (0,0) {$+$};
		\node at (-0.3,-0.7) {$n_i$};
		\node at (0.7,0.3) {$1$};
		\node at (-0.7,0.3) {$\frac{a_i}{e_i}$};
	\end{tikzpicture}
	
	\caption{Splice diagram of  the links \(\widetilde{L}_i\)}
	\label{fig:diagram_auxlink}
\end{figure}

for \(i=2,\dots g.\) The second iteration is a splice of the knot $ K_1$ and the link $\widetilde{L_2}=\mathbb{S}^1\cup \widetilde{K_2} $ choosing the components $K_1$ and $\mathbb{S}^1\subset \widetilde{L_2}$, that is the splicing operation given by the diagram 
$$\xymatrix{  K_1\ar@{-}[rr]_-{K_1\hspace{1.4cm} \mathbb{S}^1}& &\widetilde{L}_2}.$$


The resulting knot, $ K_2$ is then a $\widetilde{K_2}$ cabling on $K_1$. Recursively, the \(i\)--th iteration is made by splicing $ K_{i-1}$, the knot obtained at the \(\left(i-1\right)\)--th iteration, and the link $ \widetilde{L_i}=\mathbb{S}^1\cup \widetilde{K_i} $ choosing the components $K_{i-1}$ and $\mathbb{S}^1\subset \widetilde{L_i}$, namely the splice operation is 
$$\xymatrix{ K_{i-1}\ar@{-}[rr]_-{K_{i-1}\hspace{1.2cm} \mathbb{S}^1}& &\widetilde{L_i}}.$$

After $g$ iterations we obtain an itereated torus knot \(K=K_{\mathcal{G}}\) associated to the free semigroup \(S\) and the arrangement \(\mathcal{G}\in \mathcal{A}\) of its minimal set of generators. Moreover, the knots \(K_i\) appearing in the process correspond to the knots of the truncated semigroups \(S_i.\) With this construction, it makes sense to define the following family of knots associated to a free semigroup.

\begin{defin}\label{def:familyknots}
	We define the family of knots associated to \(S\) as 
	\[\mathcal{K}_S:=\{K_{\mathcal{G}_i}\;|\; \mathcal{G}_i\in\mathcal{A} \}\]
\end{defin}

Our next goal is to compute the fundamental group of each knot \(K_{\mathcal{G}_i}\in\mathcal{K}_S\) contained in the family. To do that, we first introduce the following notation. We define the integers \(b_1=\frac{a_1}{e_1}\) and \(b_i=\frac{a_i}{e_i}-n_in_{i-1}\frac{a_{i-1}}{e_{i-1}}\) for \( i=2,\dots, g\). We remark that because $\gcd(n_i,\frac{a_i}{e_i})=1$ then $\gcd(b_i,n_i)=1$, so using the Bezout identity we can find $x_i,y_i\in \mathbb{Z},\hspace{1mm} x_i>0\hspace{1mm}$ satisfying $x_ib_i=y_in_i+1$. Under this notation we have the following

\begin{theorem}\label{teo:iterated_knot_fundamental_group}
	Let \(S=\langle a_{r_0},\dots, a_{r_g}\rangle\) be a free semigroup and let \(\mathcal{K}_S\) be the family of knots associated to \(S\). Let \(\mathcal{G}=\{a_0,\dots,a_{g}\}\in\mathcal{A}\) be an arrangement for which \(S\) is free and let \(K_{\mathcal{G}}\) be its associated knot, i.e. an iterated torus knot having cabling invariants $\{(n_i,\frac{a_i}{e_i})\}_{i=1,\dots, g }.$ Then, the fundamental group of the knot \(K_{\mathcal{G}}\) admits a presentation with $g+1$ generators, $P_1,Q_1,\cdots,Q_{g}$, and $g$ relations:
	\begin{equation}\label{eq:relations_groupknot}
		Q_i^{n_i}=P_i^{b_i}Q_{i-1}^{n_{i-1}n_i} \hspace{3cm}(i=1,2,\dots,g\hspace{1mm};\hspace{1mm}Q_0=1)
	\end{equation}
	and the elements $P_2,\dots,P_{g}$ are defined by the relations:
	\begin{equation}\label{eq:relations_Ploops}
		P_{i+1}P_i^{y_i}Q_{i-1}^{n_{i-1}x_i}=Q_i^{x_i}, \hspace{3cm}(i=1,2,\dots,g-1) 
	\end{equation}
	where $x_i,y_i\in\mathbb{Z},\hspace{1mm} x_i>0,$ defined applying the Bezout identity to the pairs $(n_i,b_i)$, i. e., satisfying $x_ib_i=y_in_i+1.$
	
\end{theorem}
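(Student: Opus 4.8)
The plan is to argue by induction on the number $i$ of cabling iterations, using the Seifert--van Kampen theorem applied to the splice decomposition that defines $K_i$. For each $i$ the knot exterior splits along the splicing torus $T$ as
\[
\Sg^3_{K_i}=\Sg^3_{K_{i-1}}\cup_{T}C_i,
\]
where $C_i$ is the exterior of the $\Sg^1$--component inside $\widetilde L_i=\Sg^1\cup\widetilde K_i$, i.e. the complement of $\widetilde K_i$ in the solid torus $\Sg^3\smallsetminus\mathrm{int}\,V(\Sg^1)$. The base case $i=1$ is the classical torus knot presentation: $K_1=\widetilde K_1$ is the $(n_1,a_1/e_1)$--torus knot, so $\pi_1(\Sg^3_{K_1})=\langle P_1,Q_1\mid Q_1^{n_1}=P_1^{b_1}\rangle$ with $b_1=a_1/e_1$, which is exactly \eqref{eq:relations_groupknot} for $i=1$ since $Q_0=1$. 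Here $Q_1$ is the core of the cabling solid torus (the Seifert fiber of order $n_1$) and $P_1$ the complementary core of order $b_1$; a Bezout pair then realizes the meridian of $K_1$ as $P_2=Q_1^{x_1}P_1^{-y_1}$, the content of \eqref{eq:relations_Ploops} for $i=1$, and this element maps to a generator of $H_1(\Sg^3_{K_1})\cong\ZZ$ because $x_1b_1-y_1n_1=1$ (note $P_1\mapsto n_1,\ Q_1\mapsto b_1$ under abelianization).

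For the inductive step I would first record a presentation of the cable space $C_i$. Since $\widetilde K_i$ is a regular fiber of the Seifert fibration of the solid torus whose core is the exceptional fiber of order $n_i$, the space $C_i$ is Seifert fibered over an annulus with a single exceptional fiber of order $n_i$; I take $Q_i$ to be that exceptional fiber and let $h_i$ denote the regular fiber. Because $\gcd(n_i,a_i/e_i)=1$, the Seifert relation takes the clean form $h_i=Q_i^{n_i}$ in $\pi_1(C_i)$. The two boundary tori of $C_i$ are $\partial V(\Sg^1)$ and $\partial V(\widetilde K_i)$; I would express meridians and longitudes of each in terms of $Q_i$, $h_i$ and the new meridian $P_{i+1}$ of $\widetilde K_i$. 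The key boundary datum is that, on $\partial V(\Sg^1)$, the regular fiber is homologous to $n_i m'+(a_i/e_i)\ell'$, where $m',\ell'$ are the meridian and longitude of $\Sg^1$; equivalently $h_i=(m')^{n_i}(\ell')^{a_i/e_i}$, the exponent $n_i$ recording that the meridian disk of the solid torus meets $\widetilde K_i$ in $n_i$ points.

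Next I would carry out the amalgamation. The splice identifies the meridian of one side with the longitude of the other, so along $T$ one has $\ell'=m_{K_{i-1}}=P_i$ and $m'=\ell_{K_{i-1}}$. The crucial computation is the longitude dictated by the splice: it equals the previous Seifert fiber $Q_{i-1}^{\,n_{i-1}}$ corrected by a power of $P_i$ given by the framing of the stage-$(i-1)$ cabling, namely $\ell_{K_{i-1}}=Q_{i-1}^{\,n_{i-1}}P_i^{-n_{i-1}a_{i-1}/e_{i-1}}$ (for $i=1$ this is just the $0$--framed torus knot longitude, since the self-linking of $\widetilde K_1$ is $n_1b_1$). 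Substituting $m'=\ell_{K_{i-1}}$ and $\ell'=P_i$ into $Q_i^{n_i}=h_i=(m')^{n_i}(\ell')^{a_i/e_i}$ collapses the relation to
\[
Q_i^{n_i}=\bigl(Q_{i-1}^{\,n_{i-1}}P_i^{-n_{i-1}a_{i-1}/e_{i-1}}\bigr)^{n_i}P_i^{\,a_i/e_i}=P_i^{\,b_i}Q_{i-1}^{\,n_{i-1}n_i},
\]
which is precisely \eqref{eq:relations_groupknot}, the framing term turning the naive cabling parameter $a_i/e_i$ into $b_i=a_i/e_i-n_in_{i-1}(a_{i-1}/e_{i-1})$. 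Finally the new meridian $P_{i+1}$ of $K_i$ is pinned down by its homology class: choosing the Bezout pair $x_ib_i=y_in_i+1$ forces $P_{i+1}$ to be the word prescribed by \eqref{eq:relations_Ploops}, and one checks it maps to a generator of $H_1(\Sg^3_{K_i})\cong\ZZ$. After $g$ steps the generators $P_1,Q_1,\dots,Q_g$ together with the $g$ relations \eqref{eq:relations_groupknot} present the group, of deficiency $1$ as a knot group must be.

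The step I expect to be the main obstacle is the longitude/framing bookkeeping in the inductive step. One must justify that the longitude carried along by the splice is the fiber-framed one, so that the correction localizes to $n_{i-1}(a_{i-1}/e_{i-1})$ rather than the full self-linking of $K_{i-1}$, and one must reconcile the Eisenbud--Neumann splice framing conventions with the ordinary knot longitude. Fixing the sign and the exponent of this meridian correction is exactly what produces the term $n_in_{i-1}(a_{i-1}/e_{i-1})$ in $b_i$; everything else reduces to a routine van Kampen amalgamation and a homology check.
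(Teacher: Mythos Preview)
Your proposal is correct and follows essentially the same route as the paper: induction on the number of cablings, Seifert--van Kampen along the cabling torus splitting $\Sg^3_{K_i}$ into the exterior of $K_{i-1}$ and the cable space, the same presentation $\langle P_i,Q_i,L\mid Q_i^{n_i}P_i^{-a_i/e_i}=L^{n_i}\rangle$ for the latter, and the same longitude identification $L=Q_{i-1}^{\,n_{i-1}}P_i^{-n_{i-1}a_{i-1}/e_{i-1}}$ (the paper states this directly and refers to Zariski for the meridian word). One small remark on your closing worry: the quantity $n_{i-1}a_{i-1}/e_{i-1}$ \emph{is} the full self-linking of $K_{i-1}$, as an abelianization check with the already-established relations shows, so the splice longitude is the ordinary $0$-framed one and there is no separate fiber-framing convention to reconcile.
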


\begin{proof}
	
	First of all, we consider the $(n_1,a_1/e_1)$-torus knot $K_1\subset \Sg^3$ and $V(K_1)$ a tubular neighborhood of $K_1$, then it is known (see for example \cite[Prop. 3.28]{BurdeKnots}) that the fundamental group can be expressed as follows
	\begin{equation*}\label{eq:group_torus_knot}
		\pi_1(\Sg^3_K)= \langle P,Q\hspace{1mm}:\hspace{1mm} P^{a_1/e_1}=Q^{n_1}\rangle
	\end{equation*}
	
	where $P$ is homotopic to a meridian of the torus on which $K$ is defined  and $Q$ is the center line (core) of that torus.
	\medskip
	
	Secondly,  let us elaborate on the case $g=2$, and then by induction we will proceed with the general case. Let us consider the knot $K_2$ obtained after the second iteration and $\hat{V}$ a tubular neighborhood of $K_2$. The next step is to separate $\overline{\Sg^3\smallsetminus \hat{V}}$ into two spaces $U=\overline{\Sg^3\smallsetminus V}$ and $W=\overline{V\smallsetminus \hat{V}}$, here $V$ is a tubular neighborhood of the knot $K_1$ chosen so that $\hat{V}$ is contained inside. It is obvious that $U$ is the closure $\Sg^3$ minus a tubular neighborhood of the knot $K_1$, $W$ is homeomorphic to the closure of an unknoted solid torus $T$ minus a tubular neigborhood of $\widetilde{K_2}$ and that $U\cap W$ is homeomorphic to a torus. Therefore, applying the Seifert-Van Kampen theorem we obtain
	\begin{equation}
		\pi_1(\overline{\Sg^3\smallsetminus \hat{V}})=\pi_1(\overline{\Sg^3\smallsetminus V})\ast_{\langle\lambda,\mu\rangle}\pi_1(\overline{T\setminus \hat{V}})
	\end{equation}
	
	where $\lambda,\mu$ represent the homotopy class of a meridian and longitude of $T.$\\
	
	It is straightforward to check that the fundamental groups of each part admit the following representations:
	\begin{equation*}
		\pi_1(\overline{\Sg^3\smallsetminus V})=\langle  P_1, \ Q_1\ :\ Q_1^{n_1}=P_1^{\frac{a_1}{e_1}}\rangle    \quad \text{and}\quad 
		\pi_1(\overline{T\setminus \hat{V}})=\langle P_2,\  Q_2, \ L\ :\ Q_2^{n_2}P_2^{-\frac{a_2}{e_2}}=L^{n_2},\hspace{1mm} \left[L,P_2 \right]=1\rangle,
	\end{equation*}
	where $\left[\hspace{1mm}, \hspace{1mm}\right]$ denotes the commutator.
	
	\medskip
	
	At this point, we just have to check the classes of the meridian \(P_2\) and longitude \(L\) of \(T\). First, we observe that $P_2$ is also a meridian of $V$ and thus a straightforward but tedious computation leads to the relation 
	$P_2=Q_1^{x_1}P_1^{-y_1}$ in $\pi_1(U)$,  where $x_1,y_1\in \mathbb{Z}, \hspace{1.5mm} x_1>0$ are such that $x_1b_1=y_1n_1+1$. For a guide to the computation see for example the particular case in \cite{Zar32}. Recall also that \(b_1=a_1/e_1\) and \(b_i=a_i/e_i-n_in_{i-1}a_{i-1}/e_{i-1}\) for \(i=2,\dots,g.\)
	
	\medskip
	
	For the longitude \(L\), we have that \(L\) corresponds to a topological standard longitude in $\partial V$. Therefore we obtain the following relation
	\[L=Q_1^{n_1}P_2^{-n_1b_1},\]
	from this relation and, since $Q_1^{n_1}$ is a central element in $\pi_1(\overline{\Sg^3\smallsetminus V})$, the fact that $Q_1^{n_1}$ commutes with $P_2$ we obtain 
	\begin{equation}\label{eq:firstrelation}
		Q_2^{n_2}=L^{n_2}P_2^{a_2/e_2}=Q_1^{n_2n_1}P_2^{\frac{a_2}{e_2}-n_1n_2\frac{a_1}{e_1}}=Q_1^{n_2n_1}P_2^{b_2}.
	\end{equation}

	Once the first step is understood, let us move to the induction hypothesis. Assume that the fundamental group of the iterated torus knot $K_{g-1}$ having cabling invariants $\{(n_i,\frac{a_i}{e_i})\}_{i=1,\dots, g-1}$ admits a presentation with $g$ generators, $P_1,Q_1,\cdots,Q_{g-1}$, and $g-1$ relations:
	\begin{equation}
		Q_i^{n_i}=P_i^{b_i}Q_{i-1}^{n_{i-1}n_i} \quad \text{for}\quad i=1,2,\dots,g-1,
	\end{equation}
	where \(Q_0=1\) and the elements $P_2,\dots,P_{g-1}$ are defined by the relations: \( P_{i+1}P_i^{y_i}Q_{i-1}^{n_{i-1}x_i}=Q_i^{x_i} \) for \(i=1,2,\dots,g-2,\) with $x_i,y_i\in\mathbb{Z}, x_i>0$ satisfying $x_ib_i=y_in_i+1.$
	\medskip
	
	Again we will use the Seifert-Van Kampen theorem. In this case, we take the knot $K_g$ and $V(K_{g})$ a tubular neighborhood and we separate $\Sg^3_{K_{g}}$ into two spaces \(U=\overline{\Sg^3\smallsetminus V(K_{g-1})}\) and \(W=\overline{V(K_{g-1})\smallsetminus V(K_{g})}\) with the same properties as in the case \(g=2.\) By induction,  $\pi_1(U)$ admits a presentation like the one from the induction hypothesis and $$\pi_1(W)=\langle \ P_{g},\ Q_{g},L\ :\ Q_{g}^{n_{g}}P_{g}^{-\frac{a_{g}}{e_{g}}}=L^{n_{g}},\hspace{1mm} \left[L,P_g \right]=1\ \rangle.$$
	
	Then the meridian of $U\cap W$ is the class of the loop $P_g$ and as in the case \(g=2\) we have $P_g=Q_{g-1}^{x_{g-1}}Q_{g-2}^{-n_{g-2}x_{g-1}}P_{g-1}^{-y_{g-1}}$ in $\pi_1(U)$. As before, the longitude is the loop $L$ in  $\pi_1(W)$ and it corresponds to the class in $\pi_1(U)$ of a topological standard longitude of $\partial V(K_{g-1})$ that is $L=Q_{g-1}^{n_{g-1}}P_{g}^{-n_{g-1}\frac{a_{g-1}}{e_{g-1}}}$.
	\medskip
	
	Finally, similarly to the case of \(g=2\) we obtain the relation
	\begin{equation*}
		Q_g^{n_g}=P_g^{b_g}Q_{g-1}^{n_{g-1}n_g}
	\end{equation*}
	from which the result is proven.
\end{proof}

\begin{rem}
	The semigroup of values of an irreducible plane curve singularity is a free semigroup with the extra condition \(n_ia_i<a_{i+1}\) for all \(i\geq 1,\) see for example \cite{zariski}. In that case, Theorem \ref{teo:iterated_knot_fundamental_group} recovers Zariski's result about the fundamental group of the knot complement associated to the plane curve \cite{Zar32}. 
\end{rem}

At this point, it is natural to ask if this family contains more than one isotopy class of knots. The following example shows that there are free semigroups \(S\) for which the family \(\mathcal{K}_S\) contains different non-isotopic knots. 

\begin{ex}\label{ex:firstexample}
	Take the set $\{10,15,18\}$. One can check that the semigroup generated by this set is free for every arrangement, i.e. 
	\[
	\mathcal{A}=\left\{ (15, 10, 18), (15, 18, 10), (10, 15, 18), (10, 18, 15), (18, 15, 10), (18, 10, 15) \right\}
	\]
	
	In particular, we are going to focus on the arrangements  $\mathcal{G}_1:=\{18,15,10\}$, $\mathcal{G}_2:=\{10,18,15\}$ and $\mathcal{G}_3:=\{15,10,18\}$. In these cases, the iterated torus knots have cabling invariants \(\{(6,5),(3,10)\}\), \(\{(5,9),(2,15)\}\) and $\{(3,2),(5,18)\}$ for the arrangements $\mathcal{G}_1$,$\mathcal{G}_2$ and $\mathcal{G}_3$ respectively. 
	In fact, one can easily check using their splice diagram and the results in \cite[Chap. II]{EN} that 
	the rest of the knots in the family are isotopic to one of the previous knots. 
	\medskip
	
	We are going to show that, in fact, there are exactly three isotopic classes. To show that \(K_{\mathcal{G}_1},\) \(K_{\mathcal{G}_2}\) and \(K_{\mathcal{G}_2}\) are non-isotopic can be proved by looking at the (unique) torus decomposition of the knot exterior. Since the three knots have distinct torus knot spaces as one of their pieces they are non-isotopic.
	\medskip
	
	Alternatively, we can use the $\rho_{ab}-$invariant of a knot, we refer to \cite{rhoInvariant11}, where the author calls it $\rho_0$, to recall its definition and its formula in the case of iterated torus knots.  Following \cite[Corollary 2.10]{rhoInvariant11} we can easily compute the $\rho_{ab}-invariant$, see Table \ref{tab:rho_ab_example}, for the iterated torus knots mentioned above.
	\begin{table}[H]
		\centering
		\begin{tabular}{|c|c|c|c|}
			\hline
			&  $K_{\mathcal{G}_1}$ & $K_{\mathcal{G}_2}$ & $K_{\mathcal{G}_3}$\\ \hline
			& & & \\
			$\rho_{ab}$ & $\frac{-272}{15}$ & $\frac{-976}{45}$ & $\frac{-1352}{45}$ \\ 
			& & & \\\hline   \end{tabular}
		\caption{$\rho_{ab}-$invariant of the knots.}
		\label{tab:rho_ab_example}
	\end{table}
	Since the values of the \(\rho_{ab}\)--invariant differ for each knot, the three knots are non-isotopic to each other.
	\medskip

\end{ex}

As we have just seen, the family \(\mathcal{K}_S\) associated to a free numerical semigroup can contain different non-isotopic knots. However, we will see that the numerical semigroup \(S\) is topologically realizable in an unique way. The notion of topologically realizable numerical semigroup was defined by the first named author in \cite{Acisurvey} and it was suggested in the case of irreducible plane curves by him and Moyano-Fernández in \cite{AMknots}. Let us briefly recall its definition. 

A numerical semigroup has an associated semigroup algebra via the following graded homorphism: 

\[\begin{array}{ccc}
	\mathbb{C}[x_1,\dots,x_g]&\xrightarrow{\varphi} &\mathbb{C}[S] \\
	x_i&\mapsto &t^{a_i}
\end{array}\]
where the polynomial ring \(\mathbb{C}[x_1,\dots,x_g]\) is graded with \(\deg x_i=a_i.\) According to Herzog \cite{herzog}, the kernel \(I_S=\ker(\varphi)\) is a toric ideal and \(\mathbb{C}[S]\) is identified with the image of \(\varphi,\) i.e. \(\mathbb{C}[x_1,\dots,x_g]/I_S.\) 

On the other hand, given a knot \(K\), the abelianization of its knot group is always isomorphic to \(\mathbb{Z},\) and we can think in \(\mathbb{Z}\) as the multiplicative abelian group on the symbol \(\{t\}.\) Let \(G=\pi_1(X)\) be the fundamental group of the knot complement. Assume we represent \(\mathbb{Z}=<t>\) as a multiplicative group. We will say that a numerical semigroup \(S=\langle a_1,\dots, a_g\rangle\) is topologically realizable if there exists a knot \(K\) such that the following conditions holds:
\begin{enumerate}[leftmargin=1.5 em]
	
	\item There exists \(G=\pi_1(S^3\setminus K)=\langle \alpha_1,\dots,\alpha_s,\;| \ r_i(\alpha_1,\dots,\alpha_s)\;\text{for}\; i=1,\dots,l\rangle\) a presentation of the group \(G\) with \(s\) generators and \(l\) binomial relations in separated generators, i.e.
	\[r_i(\alpha_1,\dots,\alpha_s):\quad p_{i,1}(\alpha_{i_{1},\dots,i_{k}})=p_{i,2}(\alpha_{j_{1},\dots,j_{t}}) \quad \text{with}\ \{\alpha_{i_{1},\dots,i_{k}}\}\cap \{\alpha_{j_{1},\dots,j_{t}}\}=\emptyset, \ t+k\leq g+1 \ \] 
	and such that \(s=g\) being the number of minimal generators of \(I_S,\) the defining ideal of \(\mathbb{C}[S].\)
	\item The abelianization map is such that \(\alpha_i\mapsto t^{a_i}.\)
	\item If we consider the ring of polynomials \(\mathbb{C}[x_1,\dots,x_s]\) with the degrees induced by the abelianization map, then the relations of the group generate the defining ideal of \(S,\) i.e. 
	\[I_S=(p_{1,1}(x_1,\dots,x_s)-p_{1,2}(x_1,\dots,x_s),\dots,p_{l,1}(x_1,\dots,x_s)-p_{l,2}(x_1,\dots,x_s)).\]
\end{enumerate}

\begin{proposition}\label{prop:topologicallyrealizable}
	Any free numerical semigroup \(S\) is topologically realizable. Moreover,  the graded algebra associated to the fundamental group is independent of the arrangement of the set of generators for which \(S\) is free.
\end{proposition}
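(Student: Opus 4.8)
The plan is to read the required presentation straight off the one supplied by Theorem~\ref{teo:iterated_knot_fundamental_group}, and then to recognise the toric ideal inside it. Fix an arrangement $\mathcal{G}=\{a_0,\dots,a_g\}\in\mathcal{A}$ and let $K_{\mathcal{G}}$ be its knot, so that $\pi_1(\Sg^3_{K_{\mathcal{G}}})$ is generated by $P_1,Q_1,\dots,Q_g$ subject to \eqref{eq:relations_groupknot} and \eqref{eq:relations_Ploops}. I would take these $g+1$ symbols --- one for each minimal generator of $S$ --- as the candidate generators of the realizing presentation, putting $\alpha_0=P_1$ and $\alpha_i=Q_i$ for $1\le i\le g$.

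First I would compute the abelianization. Writing $d(\cdot)$ for the image in $H_1(\Sg^3_{K_{\mathcal{G}}})\cong\mathbb{Z}=\langle t\rangle$, abelianizing \eqref{eq:relations_groupknot} gives $n_i\,d(Q_i)=b_i\,d(P_i)+n_{i-1}n_i\,d(Q_{i-1})$, while \eqref{eq:relations_Ploops} together with $x_ib_i-y_in_i=1$ collapses to $d(P_{i+1})=\bigl(d(Q_i)-n_{i-1}d(Q_{i-1})\bigr)/b_i$. This is a triangular recursion in the unknowns $d(Q_i)$, with $d(Q_0)=0$; solving it and normalizing so that a meridian has degree $1$ (equivalently, so that the degrees have gcd~$1$, which holds because $\gcd(a_0,\dots,a_g)=1$) yields $d(P_1)=a_0$ and $d(Q_i)=a_i$ for all $i$, using $n_i=e_{i-1}/e_i$ and $e_in_i=e_{i-1}$ throughout. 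This is exactly condition~(2), $\alpha_i\mapsto t^{a_i}$.

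The heart of the proof is to upgrade each relation \eqref{eq:relations_groupknot} from an identity involving the auxiliary loops $P_i$ into an honest binomial in separated generators. I would argue by induction on $i$: use \eqref{eq:relations_Ploops} to eliminate $P_i$ in favour of $P_1,Q_1,\dots,Q_{i-1}$, and then collapse the resulting word with the help of the central elements $z_j:=Q_j^{n_j}$ of the successive Seifert pieces (each $z_j$ is the class of a regular fibre and so commutes with everything at its stage). The claim is that \eqref{eq:relations_groupknot} reduces to
\[
Q_i^{n_i}=P_1^{c_{i0}}Q_1^{c_{i1}}\cdots Q_{i-1}^{c_{i,i-1}},\qquad c_{ij}\ge 0,\ \ n_ia_i=\sum_{j<i}c_{ij}a_j,
\]
that is, precisely the monomial relation attached to the complete-intersection generator $x_i^{n_i}-x_0^{c_{i0}}\cdots x_{i-1}^{c_{i,i-1}}$ of $I_S$, whose exponents are legitimate by freeness \eqref{eq:freesemigroup}. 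This is the step I expect to be the main obstacle: the words produced by \eqref{eq:relations_Ploops} are a priori non-commutative and carry negative exponents (indeed $b_i$ itself may be negative), so forcing the exponents to collapse onto the nonnegative semigroup coefficients $c_{ij}$ requires delicate bookkeeping with the fibres $z_j$ and with the Bézout data $x_i,y_i$, controlled by the recursive structure $S=(n_2\cdots n_g)S_1+\cdots+a_g\mathbb{N}$.

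Granting the reduction, the two sides of each rewritten relation involve the disjoint generator sets $\{Q_i\}$ and $\{P_1,Q_1,\dots,Q_{i-1}\}$, a total of $1+i\le g+1$ symbols, so condition~(1) holds; and by Herzog's theorem \cite{herzog} (equivalently Bertin--Carbone \cite{beca77}) the $g$ binomials $x_i^{n_i}-x_0^{c_{i0}}\cdots x_{i-1}^{c_{i,i-1}}$ generate $I_S$ as a complete intersection, giving condition~(3). Finally, for the \emph{moreover} part I would note that $I_S=\ker\varphi$ and hence $\mathbb{C}[S]=\mathbb{C}[x_0,\dots,x_g]/I_S$ depend only on the \emph{set} $\{a_0,\dots,a_g\}$ and not on its ordering: distinct arrangements yield genuinely different knots $K_{\mathcal{G}_i}$ (Example~\ref{ex:firstexample}) and different complete-intersection bases, but the ideal they generate --- and therefore the graded algebra associated to the fundamental group --- is in every case the semigroup algebra of $S$.
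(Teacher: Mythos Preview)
Your outline is more ambitious than the paper's own argument. The paper verifies only condition~(2): from the presentation of Theorem~\ref{teo:iterated_knot_fundamental_group} it invokes the homology relation $P_i=P_{i+1}^{n_i}$ between successive meridians (a geometric fact, rather than solving the abelianized system of \eqref{eq:relations_groupknot}--\eqref{eq:relations_Ploops} algebraically as you do) to obtain $P_i\mapsto t^{e_{i-1}}$ and in particular $P_1\mapsto t^{a_0}$ directly, and then $Q_i\mapsto t^{a_i}$ by induction on \eqref{eq:relations_groupknot}. It then stops, having declared at the outset that this computation ``will be enough''; conditions~(1) and~(3), and in particular the identification of the associated ideal with $I_S$, are not argued separately.

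The reduction you attempt --- rewriting each relation \eqref{eq:relations_groupknot} as a positive monomial identity $Q_i^{n_i}=P_1^{c_{i0}}Q_1^{c_{i1}}\cdots Q_{i-1}^{c_{i,i-1}}$ \emph{in the group} --- is therefore not something the paper does, and the obstacle you flag is genuine. After expanding the $P_i$ via \eqref{eq:relations_Ploops} the right-hand side is a word in $P_1,Q_1,\dots,Q_{i-1}$ that may carry negative exponents (indeed the $b_i$ themselves need not be positive), and the fibre classes $z_j=Q_j^{n_j}$ are central only in their own Seifert piece, not in the full amalgam, so there is no evident mechanism forcing the word to collapse to a single monomial matching a chosen nonnegative factorization of $n_ia_i$ in $S$. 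The paper simply does not engage with this. What does survive without the non-commutative reduction is the commutative content of condition~(3): once the weights $a_0,\dots,a_g$ are known, the complete-intersection binomials $x_i^{n_i}-x_0^{c_{i0}}\cdots x_{i-1}^{c_{i,i-1}}$ (for any nonnegative $c_{ij}$ with $\sum_j c_{ij}a_j=n_ia_i$, supplied by freeness) generate $I_S$, and this depends only on the set $\{a_0,\dots,a_g\}$. That is both your argument for the ``moreover'' clause and the paper's.
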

\begin{proof}
	To prove this result, it will be enough to show that the expression of the generators of the knot group in the abelianized knot group is independent of the chosen arrangement for the semigroup, that is, independent of the chosen knot which represent the semigroup.
	
	Consider \(S=\langle a_0,a_1,\dots,a_g\rangle \) a free semigroup for the arrangement $\{a_0,a_1,\dots,a_g\}$. Also consider the knot associated to this arrangement, i.e, the iterated torus knot having cabling invariants $\{(n_i,\frac{a_i}{e_i})\}_{i=1,\dots, g }.$ We know from Theorem \ref{teo:iterated_knot_fundamental_group} that the fundamental group of this knot admits a presentation with $g+1$ generators, $P_1,Q_1,\cdots,Q_{g}$, and $g$ relations:
	\begin{equation}
		Q_i^{n_i}=P_i^{b_i}Q_{i-1}^{n_{i-1}n_i} \hspace{3cm}(i=1,2,\dots,g\hspace{1mm};\hspace{1mm}Q_0=1)
	\end{equation}
	and the elements $P_2,\dots,P_{g}$ are defined by the relations:
	\begin{equation}
		P_{i+1}P_i^{y_i}Q_{i-1}^{n_{i-1}x_i}=Q_i^{x_i}, \hspace{3cm}(i=1,2,\dots,g-1) 
	\end{equation}
	where $x_i,y_i\in\mathbb{Z},\hspace{1mm} x_i>0,$ defined applying the Bezout identity to the pairs $(n_i,b_i)$, i.e, satisfying $x_ib_i=y_in_i+1.$
	\medskip
	
	It is a well known result that the abelianized fundamental group of a knot complement is isomorphic to $\mathbb{Z}$ and that we can take as a generator a meridian of a tubular neighborhood of the knot, namely $P_{g+1}$.  With the purpose to simplify the text we are going to denote this generator by $t$, so we have that all elements of the abelianized knot group are powers of $t$.
	Now we will use the relations from the presentation shown above of the knot group to calculate the expression as powers of $t$ of the generators. 
	
	First, remember that we obviously have the homology relation
	\begin{equation}
		P_i=P_{i+1}^{n_i} \text{ for } i=1,\dots,g.
	\end{equation}
	Then we have the following expression as powers of $t$
	\begin{equation}
		P_i=t^{n_in_{i+1}\cdots n_g}=t^{e_{i-1}} \text{ for } i=1,\dots,g,
	\end{equation}
	so we obtain $P_1=t^{e_0}=t^{a_0}$.
	
	To compute the expression of the generators $Q_1,\dots,Q_g$ we will use induction. Since $P_1^{\frac{a_1}{e_1}}=Q_1^{n_1}$ by Theorem \ref{teo:iterated_knot_fundamental_group}, we have that $Q_1=t^{a_1}$ in the abelianized fundamental group. So we assume as induction hypothesis that $Q_i=t^{a_i} \text{ for } i=1,\dots, k-1$ and we prove it for $k$. For this purpose take the relation:
	\begin{equation}\label{eq:hypotesis_abelianized}
		Q_k^{n_k}=P_k^{b_k}Q_{k-1}^{n_kn_{k-1}}
	\end{equation}
	then we have by induction hypothesis $Q_{k-1}=t^{a_{k-1}}$, and $P_k=t^{e_{k-1}}$. Also remember that we defined $b_k=\frac{a_k}{e_k}-n_kn_{k-1}\frac{a_{k-1}}{e_{k-1}}$, so from the relation (\ref{eq:hypotesis_abelianized}) we obtain:
	\begin{equation}
		Q_k^{n_k}=t^{n_{k}a_k-n_kn_{k-1}a_{k-1}}t^{n_kn_{k-1}a_{k-1}}=t^{n_ka_k}
	\end{equation}
	and the result is proven. 
\end{proof}

\section{Some properties of the family of knots associated to a free semigroup}

The knot group is a very powerful invariant for distinguishing two knots, the only problem is that determining whether two groups are isomorphic is not usually an easy task. Because of this issue, frequently we use other invariants that derive from the knot group. The most common of these invariants is the Alexander polynomial which we will denote by $\Delta_K(t)$.

In this section, we will provide some properties of the family of knots associated to a fixed free semigroup. In particular, we will show that all knots in the family have the same Alexander polynomial and genus.

\subsection{Alexander polynomial and Poincaré series} \label{sec:alexander}
The definition of the Alexander polynomial is based on the notion of universal abelian covering \(\rho:\widetilde{X}\rightarrow X\) of the knot complement \(X=S^3\setminus K.\) The group of covering transformations \(H_1(X;\mathbb{Z})=\mathbb{Z}\) is a free abelian multiplicative group on the symbol \(\{t\}\) where \(t\) is geometrically associated with an oriented meridian of the knot. In this way, if \(\widetilde{p}\) is a typical fiber of \(\rho\) then the group \(H_1(\widetilde{X},\widetilde{p};\mathbb{Z})\) becomes a module over \(\mathbb{Z}[t^{\pm 1}].\) The Alexander polynomial \(\Delta_K(t)\) is then defined as the greatest common divisor of the first Fitting ideal \(F_1(H_1(\widetilde{X};\mathbb{Z})).\) Let \(S=\langle a_0,\dots,a_g\rangle\) be a free semigroup for the arrangement \(\{a_0,\dots,a_g\}.\) In the previous section we showed that there exists an iterated torus knot having cabling invariants $\{(p_i,q_i)\}_{i=1,\dots, g }=\{(n_i,\frac{a_i}{e_i})\}_{i=1,\dots, g }.$ Following \cite[Chap IV. 17 (a)]{EN}, we can compute the Alexander polynomial from the splice diagram if we enlarge the labeling in the diagram as follows:

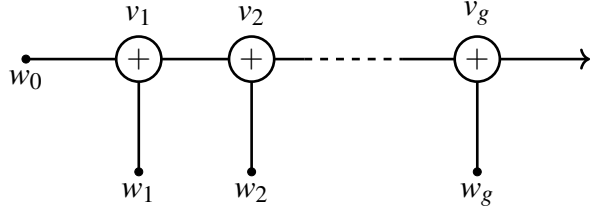
\begin{figure}[H]
	\centering
	\begin{tikzpicture}
		
		\draw[line width=1pt] (0,0) circle [radius=0.3cm] node[above,yshift=0.3cm] {\(v_1\)};
		\draw[line width=1pt] (1.5,0) circle [radius=0.3cm] node[above,yshift=0.3cm] {\(v_2\)};
		\draw[line width=1pt] (4.5,0) circle [radius=0.3cm] node[above,yshift=0.3cm] {\(v_g\)};
		
		\draw[line width=1pt] (-1.5,0) -- (-0.3,0);
		\draw[line width=1pt] (0.3,0) -- (1.2,0);
		\draw[line width=1pt] (0,-1.5) -- (0,-0.3);
		\draw[line width=1pt] (1.8,0) -- (2.2,0);
		\draw[line width=1pt,dashed] (2.2,0) -- (3.5,0);
		\draw[line width=1pt] (3.5,0) -- (4.2,0);
		\draw[line width=1pt,->] (4.8,0) -- (6,0);
		\draw[line width=1pt] (1.5,-1.5) -- (1.5,-0.3);
		\draw[line width=1pt] (4.5,-1.5) -- (4.5,-0.3);
		
		\filldraw (-1.5,0) circle [radius=1.5pt] node[below] {\(w_0\)};
		\filldraw (0,-1.5) circle [radius=1.5pt] node[below] {\(w_1\)};
		\filldraw (1.5,-1.5) circle [radius=1.5pt] node[below] {\(w_2\)};
		\filldraw (4.5,-1.5) circle [radius=1.5pt] node[below] {\(w_g\)};
		
		\node at (0,0) {$+$};
		\node at (1.5,0) {$+$};
		\node at (4.5,0) {$+$};
	\end{tikzpicture}
	\caption{Splice diagram of iterated torus knot}
	\label{fig:spliceenlarged}
\end{figure}
The numbers \(w_i,v_i\) have the following expression in terms of the cabling invariants: 
\[w_0=p_1\cdots p_n,\; w_i=q_ip_{i+1}\cdots p_n \quad \text{and}\quad v_i=q_ip_i\cdots p_n.\]

In this way, from the definition of the cabling invariants of a knot associated to \(S,\) we have that \(w_i=a_i\) and \(v_i=n_ia_i\) for all \(i.\) This leads to the following property.
\begin{proposition}\label{prop:linkinvariants}
	Let \(\mathcal{K}_S\) be the family of knots associated to a free semigroup \(S=\langle a_0,\dots,a_g\rangle.\) If \(K\in \mathcal{K}_S\) then \(w_i=a_i\) for all \(i=0,\dots,g\) and \(v_i=n_ia_i\) for all \(i=1,\dots,g.\)
	
	In particular, the numbers \(w_i,v_i\) are equal up to reordering for all members in the family \(\mathcal{K}_S.\)
\end{proposition}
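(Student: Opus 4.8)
The plan is to read $w_i$ and $v_i$ off the enlarged splice diagram and collapse the resulting products by telescoping. For any $K\in\mathcal{K}_S$ the cabling invariants are $(p_i,q_i)=(n_i,\tfrac{a_i}{e_i})$, so the Eisenbud--Neumann formulas $w_0=p_1\cdots p_g$, $w_i=q_i\,p_{i+1}\cdots p_g$ and $v_i=q_i\,p_i\cdots p_g$ become products of the ratios $n_i=e_{i-1}/e_i$. First I would use that $S$ is a numerical semigroup, so $e_g=\gcd(a_0,\dots,a_g)=1$. Then $p_{i+1}\cdots p_g=\frac{e_i}{e_{i+1}}\cdots\frac{e_{g-1}}{e_g}=\frac{e_i}{e_g}=e_i$, whence $w_i=\frac{a_i}{e_i}\,e_i=a_i$ for $i\geq 1$, while $w_0=n_1\cdots n_g=\frac{e_0}{e_g}=a_0$. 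Similarly $p_i\cdots p_g=\frac{e_{i-1}}{e_g}=e_{i-1}$, so that $v_i=\frac{a_i}{e_i}\,e_{i-1}=n_i a_i$. This is a purely mechanical computation and it establishes the two displayed identities.

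For the closing assertion I would show that each multiset is intrinsic to $S$, independent of the arrangement used to build $K$. For the $w_i$ this is immediate: $\{w_0,\dots,w_g\}=\{a_0,\dots,a_g\}$ is the minimal generating set of $S$, which is unique, hence the same for every $K\in\mathcal{K}_S$. For the $v_i$ the integers $n_i$ genuinely depend on the ordering, so more is needed. Here I would use that the freeness condition writes $n_i a_i=\sum_{j<i}c_{ij}a_j$ with $c_{ij}\in\mathbb{N}$, producing a homogeneous binomial $x_i^{n_i}-\prod_{j<i}x_j^{c_{ij}}\in I_S$ of degree $n_i a_i$ under the grading $\deg x_j=a_j$. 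For a free semigroup these $g$ binomials generate $I_S$, and by Proposition \ref{prop:topologicallyrealizable} the ideal $I_S$ has exactly $g$ minimal generators, so they form a minimal homogeneous generating set. Since the multiset of degrees of a minimal homogeneous generating set of a graded ideal is an invariant of that ideal, and $I_S$ depends only on $S$, the multiset $\{v_i\}=\{n_i a_i\}$ is the same for every member of $\mathcal{K}_S$.

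The telescoping identities are routine; the genuine content, and the step I expect to cost the most care, is the invariance of $\{v_i\}$. The decisive point is to recognize the $n_i a_i$ as the degrees of the relations of the toric ideal $I_S$ and then to invoke the intrinsic nature of that graded ideal. I would make sure the degree bookkeeping is airtight --- in particular that each binomial really is homogeneous of degree $n_i a_i$, equivalently that $b_i e_{i-1}=n_i a_i-n_{i-1}n_i a_{i-1}$ --- after which the independence of the arrangement follows formally from the uniqueness of the minimal-generator degrees.
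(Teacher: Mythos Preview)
Your telescoping computation is exactly what the paper does: right before the proposition it records the Eisenbud--Neumann formulas for $w_i,v_i$, plugs in $(p_i,q_i)=(n_i,a_i/e_i)$, and declares $w_i=a_i$, $v_i=n_ia_i$. The proposition is then stated without proof, so on the first part you match the paper verbatim (only spelling out the telescoping $n_{i+1}\cdots n_g=e_i/e_g=e_i$ that the paper leaves implicit).

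For the ``in particular'' clause the paper offers no argument at all, whereas you supply one. Your idea---identify $\{n_ia_i\}$ with the multiset of degrees of a minimal homogeneous generating set of the toric ideal $I_S$, which is intrinsic to $S$---is correct and is a genuinely informative addition. Two small clean-ups: (i) Proposition~\ref{prop:topologicallyrealizable} is not the right citation for ``$I_S$ has exactly $g$ minimal generators''; that is the classical fact that free numerical semigroups are complete intersections (Delorme \cite{delormeglue}, Herzog \cite{herzog}), so $I_S\subset\mathbb{C}[x_0,\dots,x_g]$ has height $g$ and the $g$ binomials $x_i^{n_i}-\prod_{j<i}x_j^{c_{ij}}$ already generate it, hence minimally. (ii) Your closing remark about checking $b_ie_{i-1}=n_ia_i-n_in_{i-1}a_{i-1}$ is fine but unnecessary here: homogeneity of the binomial follows directly from $n_ia_i=\sum_{j<i}c_{ij}a_j$, which is the freeness condition itself. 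With those adjustments your argument for the second clause is a clean proof of something the paper only asserts.
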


Remember that for a numerical semigroup $S$, $P_S(t)$ denotes its generating series. Then, as a consequence of Proposition \ref{prop:linkinvariants}, we can provide the following theorem. 
\begin{theorem}\label{thm:AlexanderPoincare}
	Let \(S=\langle a_0,\dots,a_g\rangle\) be a free numerical semigroup and let \(\mathcal{K}_S\) be the family of associated knots. Then for all \(K\in\mathcal{K}_S\) we have
	\[\Delta_{K}(t)=P_S(t)(1-t).\]
	In particular, all knots associated to a free numerical semigroup have the same Alexander polynomial.
\end{theorem}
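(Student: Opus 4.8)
The plan is to reduce the whole statement to the Eisenbud--Neumann product formula for the Alexander polynomial read off from the enlarged splice diagram of Figure \ref{fig:spliceenlarged}, and then to identify the resulting rational function with \((1-t)P_S(t)\) by exploiting the freeness of \(S\). Throughout I write \(\doteq\) for equality up to multiplication by a unit \(\pm t^{k}\) of \(\mathbb{Z}[t^{\pm 1}]\), since the Alexander polynomial is only defined up to such a unit.

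First I would invoke the Eisenbud--Neumann formula \cite[Chap.~IV.17(a)]{EN}, which evaluates \(\Delta_K(t)\) as a product over the vertices of the splice diagram: each node of valency \(\delta\) contributes a factor \((t^{m}-1)^{\delta-2}\), where \(m\) is its linking number with the knot, and each boundary vertex contributes the analogous factor. In Figure \ref{fig:spliceenlarged} every node \(v_i\) has valency three and every boundary vertex \(w_i\) has valency one, so that, together with the boundary \((1-t)\) factor coming from the knot component, one obtains
\[
\Delta_K(t)\;\doteq\;(1-t)\,\frac{\prod_{i=1}^{g}(1-t^{v_i})}{(1-t^{w_0})\prod_{i=1}^{g}(1-t^{w_i})}.
\]
I would sanity-check this normalization on a torus knot \(T(p,q)\), where it reproduces the classical \(\tfrac{(t^{pq}-1)(t-1)}{(t^{p}-1)(t^{q}-1)}\). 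The crucial observation is that the right-hand side depends only on the multisets \(\{w_i\}\) and \(\{v_i\}\); since Proposition \ref{prop:linkinvariants} shows these coincide up to reordering for every \(K\in\mathcal{K}_S\), and the expression is symmetric under permuting indices, all members of the family share the same Alexander polynomial. This already settles the final assertion of the theorem.

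Next, substituting \(w_i=a_i\) and \(v_i=n_ia_i\) from Proposition \ref{prop:linkinvariants}, the displayed formula becomes \((1-t)\prod_{i}(1-t^{n_ia_i})/\big((1-t^{a_0})\prod_{i}(1-t^{a_i})\big)\), and it remains to recognise the fraction as \(P_S(t)\). Here I would use the defining property of a free semigroup: every \(s\in S\) has a \emph{unique} expression \(s=\lambda_0a_0+\sum_{i=1}^{g}\lambda_ia_i\) with \(\lambda_0\ge 0\) and \(0\le\lambda_i\le n_i-1\) for \(i\ge 1\). Summing the corresponding monomials factors the generating series as
\[
P_S(t)=\sum_{s\in S}t^s=\frac{1}{1-t^{a_0}}\prod_{i=1}^{g}\left(\sum_{\lambda_i=0}^{n_i-1}t^{\lambda_i a_i}\right)=\frac{\prod_{i=1}^{g}(1-t^{n_ia_i})}{(1-t^{a_0})\prod_{i=1}^{g}(1-t^{a_i})},
\]
which is exactly the fraction above; combining the two displays yields \(\Delta_K(t)\doteq(1-t)P_S(t)\).

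Finally, to promote this relation up to a unit into an honest equality, I would observe that \((1-t)P_S(t)=\sum_{s\in S}(t^{s}-t^{s+1})\) telescopes to a genuine polynomial of degree equal to the conductor of \(S\) (because \(\mathbb{N}\setminus S\) is finite), with leading coefficient \(+1\) and nonzero constant term. Fixing the normalization of \(\Delta_K\) to be the representative with nonzero constant term and positive leading coefficient then removes the ambiguity \(\pm t^{k}\) and gives \(\Delta_K(t)=(1-t)P_S(t)=(1-t)\sum_{s\in S}t^{s}\). The main obstacle I anticipate is purely bookkeeping inside the Eisenbud--Neumann machinery: pinning down the correct exponents, the linking-number labels \(v_i,w_i\), and the overall \((1-t)\) factor so that the product form is exactly right. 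Once that product is in hand, the semigroup computation and the normalization step are routine.
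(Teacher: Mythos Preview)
Your proposal is correct and follows essentially the same route as the paper: invoke the Eisenbud--Neumann product formula for \(\Delta_K\) from the splice diagram, plug in the labels \(w_i=a_i\), \(v_i=n_ia_i\) supplied by Proposition~\ref{prop:linkinvariants}, and match the result with the closed form of \(P_S(t)\). The only differences are cosmetic: the paper simply cites the formula for \(P_S(t)\) rather than deriving it from the unique-representation property of free semigroups, and it does not spell out the unit-normalization step that you handle at the end.
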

\begin{proof}
	Let \(K_i\in \mathcal{K}_S.\) According to \cite[Chap IV]{EN} we have 
	\[\Delta_{K_i}(t)=(1-t)\frac{\prod_{i=1}^g(1-t^{v_i})}{\prod_{i=0}^g(1-t^{w_i})}.\]
	By Proposition \ref{prop:linkinvariants} we have \(v_i=n_ia_i\) and \(w_i=a_i\) for all \(i.\) Therefore, \(\Delta_{K_i}(t)=\Delta_{K_j}(t)\) for all \(K_i,K_j\in\mathcal{K}_S.\)
	On the other hand, it is well known (see for example \cite{Acisurvey} and the references therein) 
	\[P_S(t)=\frac{\prod_{i=1}^g(1-t^{n_ia_i})}{\prod_{i=0}^g(1-t^{a_i})},\]
	from which we conclude the proof.
\end{proof}
Recall that iterated torus knots are fibered. This implies that the degree of the Alexander polynomial is twice the Seifert genus of the knot, \(\deg(\Delta(t))=2g(K)\) (see for example \cite[Prop. 8.16]{BurdeKnots}). On the other hand, recall that a numerical semigroup \(S\) has a conductor, i.e. \(c\in \mathbb{N}\) such that \(c+\mathbb{N}\subset S\) and \(c-1\notin S.\) Recall also, that in the case of free numerical semigroups the graded algebra associated to \(R=\mathbb{C}[S]\) is a Gorenstein ring and thus \(c=2\delta(S),\) where \(\delta(S)=\dim_\mathbb{C}\overline{R}/R\) and \(\overline{R}\) being the normalization of \(R;\) we refer to the survey \cite{Acisurvey} and the references therein for further details. An important corollary is 
\begin{corollary}\label{cor:deltagenus}
	Let \(\mathcal{K}_S\) be the family of knots associated to a free numerical semigroup \(S.\) Then for any \(K\in\mathcal{S}\) we have 
	\[g(K)=\delta(S).\]
\end{corollary}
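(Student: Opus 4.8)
The plan is to combine the two facts that are already in hand by the time we reach Corollary~\ref{cor:deltagenus}, namely Theorem~\ref{thm:AlexanderPoincare} together with the fibredness of iterated torus knots, and the Gorenstein structure of the semigroup ring of a free numerical semigroup. First I would recall that every iterated torus knot is fibred, so that its Alexander polynomial is monic and its degree equals twice the Seifert genus, $\deg(\Delta_K(t))=2g(K)$; this is exactly the statement cited from \cite[Prop.~8.16]{BurdeKnots}. Thus it suffices to show that $\deg(\Delta_K(t))=2\delta(S)$ for any $K\in\mathcal{K}_S$, and the whole problem reduces to computing the degree of $\Delta_K$.

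The second step is to read off this degree from Theorem~\ref{thm:AlexanderPoincare}. We have $\Delta_K(t)=(1-t)P_S(t)$, and $P_S(t)$ is the generating series $\sum_{s\in S}t^s$ of the semigroup. The key observation is that because $S$ has a conductor $c$ with $c+\mathbb{N}\subset S$, the product $(1-t)\sum_{s\in S}t^s$ is actually a polynomial: the telescoping kills all sufficiently large powers of $t$, leaving a finite sum whose top-degree term records the last ``gap-to-element'' transition. More precisely, writing $\sum_{s\in S}t^s=\sum_{s\in S}t^s$ and multiplying by $(1-t)$, one checks that the coefficient of $t^k$ in $(1-t)P_S(t)$ is the difference $\chi_S(k)-\chi_S(k-1)$ of indicator values, so the highest nonzero term occurs at $t^{c}$, coming from the jump at the conductor. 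Hence $\deg\big((1-t)P_S(t)\big)=c$.

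The third step is purely semigroup-theoretic and is supplied by the text preceding the statement: for a free numerical semigroup the associated graded algebra $R=\mathbb{C}[S]$ is Gorenstein, and therefore its conductor satisfies $c=2\delta(S)$, where $\delta(S)=\dim_{\mathbb{C}}\overline{R}/R$. Combining the three steps gives
\[
2g(K)=\deg(\Delta_K(t))=\deg\big((1-t)P_S(t)\big)=c=2\delta(S),
\]
and dividing by $2$ yields $g(K)=\delta(S)$ for every $K\in\mathcal{K}_S$, as claimed.

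I do not expect any genuine obstacle here, since each ingredient is either a cited classical fact (fibredness and the degree/genus relation for fibred knots, the Gorenstein property and $c=2\delta(S)$) or an immediate consequence of Theorem~\ref{thm:AlexanderPoincare}. The only point requiring a line of care is the claim that $\deg\big((1-t)P_S(t)\big)=c$: one must verify that the coefficient of $t^c$ in $(1-t)P_S(t)$ is genuinely nonzero (it equals $\chi_S(c)-\chi_S(c-1)=1-0=1$ because $c\in S$ and $c-1\notin S$) and that no higher power survives (every $k>c$ lies in $S$, so $\chi_S(k)-\chi_S(k-1)=0$). This is the mild computational heart of the argument, but it is routine and follows directly from the defining property of the conductor.
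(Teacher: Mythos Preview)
Your argument is correct and follows exactly the line the paper sketches in the paragraph immediately preceding the corollary: fibredness gives $\deg\Delta_K=2g(K)$, the Gorenstein property gives $c=2\delta(S)$, and Theorem~\ref{thm:AlexanderPoincare} links the two. The paper does not write out a separate proof, treating the corollary as immediate from that discussion; your explicit verification that $\deg\big((1-t)P_S(t)\big)=c$ via the indicator-function telescoping simply makes explicit the one step the paper leaves to the reader.
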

\subsection{Some relevant families}
To finish the paper, we will review some relevant families of iterated torus knots in order to show the richness of the family of knots associated to a free semigroup.
\subsubsection{Algebraic knots}\label{sec:algebraicknots}
Let \((C,0)\subset(\mathbb{C}^2,0)\) be a germ of an isolated irreducible plane curve singularity. Let \(R\) denote its local ring at the origin and let \(\overline{R}\simeq\mathbb{C}\{t\}\) be its normalization. The normalization \(R\hookrightarrow\overline{R}\) induces a discrete valuation \(v:R\rightarrow\mathbb{Z}\) from which \(S=v(R)\) has a natural structure of numerical semigroup (see \cite{zariski}). The minimal generators of \(S_C=\langle a_1,\dots,a_g\rangle\) are computed from a parameterization of the curve \cite{zariski} and satisfy the following condition:
\begin{equation*}
	\label{cond:beta} S_C\; \text{is free and}\; n_{i}a_i<a_{i+1}\; \text{for}\; 2\leq i\leq g-1.
\end{equation*}

On the other hand, associated to \(C\) there is the knot \(K_C=C\cap\mathbb{S}^3.\) The associated knot \(K_C\) is an iterated torus knot whose cable invariants can be computed in terms of the minimal generators, which in fact are determined by the Puiseux parametrization of the curve; we refer to \cite{EN,Zar32,zariski} for further details. The iterated torus knots arising from an isolated irreducible plane curve singularity are called algebraic knots. 

As the free condition is less restrictive, one can easily check that the set of all possible arrangements for which \(S_C\) is free can be quite large. Therefore, a natural question is to ask whether any \(K\in \mathcal{K}_S\) is isotopically equivalent to \(K_C.\) In this direction, the following example shows that one can easily obtain examples for which the answer to this question is negative. 

Let us consider \(C:f(x,y)=(y^5-x^7)^4+x^{29}.\) Its semigroup of values is \(S_C=\langle 20, 28, 145\rangle.\) Following Delorme \cite{delormeglue}, it is easy to check that the arrangements for which  \(S_C\) is free are 
\[\{20, 28, 145\},\{28, 20, 145\},\{20, 145, 28\},\{145, 20, 28\}.\]
As in Example \ref{ex:firstexample}, we only need to examine \(\mathcal{G}_1=\{20, 28, 145\}\) and \(\mathcal{G}_2=\{20, 145, 28\}.\) According to Zariski \cite{Zar32}, the knot \(K_C\) coincides with the knot provided by our Theorem \ref{teo:iterated_knot_fundamental_group} for the arrangement \(\mathcal{G}_1.\) On the other hand, our Theorem \ref{teo:iterated_knot_fundamental_group} also produces the knot \(K_{\mathcal{G}_2}\) associated to \(\mathcal{G}_2.\) As in Example \ref{ex:firstexample}, if we look at the \(\rho_{ab}\)--invariants, we have \(\rho_{ab}(K_C)\neq \rho_{ab}(K_{\mathcal{G}_2}).\) Therefore, the knots are non--isotopic.

Also, according to the classification of algebraic knots provided in \cite{EN}, \(K_{\mathcal{G}_2}\) is not an algebraic knot. This shows that the isotopic class of the knot \(K_C\) associated to the curve is not unique in the set \(\mathcal{K}_{S_C}\) of knots associated to the free semigroup \(S_C.\) Even more, this class contains knots which are not algebraic.

\subsubsection{L-space knots}\label{sec:Lspace}
The notion of \(L\)--space was first defined in \cite{OStopology}. An \(L\)--space is a rational homology sphere having the Heegaard Floer homology of a lens space. A knot \(K\) in \(\mathbb{S}^3\) is an \(L\)--space knot if it admits an \(L\)--space surgery. It is not our purpose to go into the details of this definition, therefore we refer the reader to \cite{OSadvances,OStopology} for further details.

From the point of view of cable knots, Hom  \cite{Hom11} proved the following property for \(L\)--space knots.
\begin{theorem}\cite{Hom11}\label{teo:HOM11}
	The \((p,q)\)--cable of a knot \(K\subset \mathbb{S}^3\) admits a positive L-space surgery if and only if K admits a positive L-space surgery and \(q/p\geq 2g(K)-1\), where \(g(K)\) is the Seifert genus of \(K\).
\end{theorem}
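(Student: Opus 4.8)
Any proof of this statement must be Heegaard--Floer theoretic, since the notions of $L$-space and of $L$-space knot are defined through the Heegaard Floer homology $\widehat{HF}$. The plan is to translate the hypothesis ``admits a positive $L$-space surgery'' into the structural characterization of $L$-space knots and then to control how that structure transforms under $(p,q)$-cabling. Concretely, I would first invoke the Ozsv\'ath--Szab\'o characterization: a knot $K$ admits a positive $L$-space surgery if and only if $S^3_r(K)$ is an $L$-space for every slope $r\geq 2g(K)-1$, which in turn holds precisely when the full knot Floer complex $CFK^\infty(K)$ is a \emph{staircase}, equivalently when $\widehat{HFK}(K,j)$ has rank at most one in each Alexander grading and the Maslov gradings of the surviving generators assemble into the staircase dictated by $\Delta_K(t)=\sum_j(-1)^jt^{\alpha_j}$. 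In particular $2g(K)-1$ is exactly the minimal $L$-space surgery slope. This reduces the theorem to a statement about the effect of cabling on staircase complexes.

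The single computational engine behind both implications is a formula expressing $CFK^\infty(K_{p,q})$ in terms of $CFK^\infty(K)$ and the $(p,q)$ cabling pattern. The cleanest route is bordered Heegaard Floer homology: the cable pattern is a knot in the solid torus, so one pairs the type-$A$ bordered invariant of the pattern with the type-$D$ invariant $\widehat{CFD}$ of the complement of $K$ and reads off the knot Floer complex of the satellite from the pairing. Equivalently, one may package the same data through Hom's computation of the concordance invariants $\tau$ and $\varepsilon$ of cables, using $\varepsilon(K)=1$ and $\tau(K)=g(K)$ as the Floer-theoretic signature of a positive $L$-space knot.

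For the direction $(\Leftarrow)$, assuming $K$ is an $L$-space knot and $q/p\geq 2g(K)-1$, I would feed the staircase $CFK^\infty(K)$ into the pairing and verify that the output for $K_{p,q}$ is again a staircase; intuitively the inequality forces the meridional winding $q$ to dominate the horizontal width of the companion's staircase, so that no off-diagonal differential can merge two generators into a single Alexander grading and spoil the rank-one condition. For the direction $(\Rightarrow)$, assuming $K_{p,q}$ is an $L$-space knot, the same computation run backwards shows that a staircase output can only arise from a staircase input satisfying the genus inequality, so $K$ is an $L$-space knot and $q/p\geq 2g(K)-1$. As a sanity check on the threshold one can use Gordon's cabling formula $S^3_{pq}(K_{p,q})\cong S^3_{q/p}(K)\,\#\,L(p,q)$ together with the fact that a connected sum is an $L$-space if and only if each summand is: since $pq\geq 2g(K_{p,q})-1$ is equivalent to $q/p\geq 2g(K)-1$ via $g(K_{p,q})=pg(K)+\tfrac{(p-1)(q-1)}{2}$, the cabling slope becomes a large surgery exactly at the stated threshold, and there $L(p,q)$ being a lens space makes the surgery an $L$-space iff $S^3_{q/p}(K)$ is.

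The hard part will be the cabling computation itself, namely controlling $CFK^\infty(K_{p,q})$ precisely enough to detect the \emph{sharp} threshold $q/p=2g(K)-1$ rather than some non-optimal bound. This is exactly where elementary surgery descriptions fail and the bordered pairing theorem (or an equivalent mapping-cone/large-surgery argument) is indispensable: establishing that the staircase property is preserved under cabling if and only if the genus inequality holds, with no slack on either side, is the technical heart of the argument and the step I expect to demand the most work.
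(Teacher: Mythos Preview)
The paper does not prove this statement at all: Theorem~\ref{teo:HOM11} is quoted verbatim from \cite{Hom11} and used as a black box to derive Proposition~\ref{prop:charL-space}. There is therefore no ``paper's own proof'' to compare against; the authors simply invoke Hom's result.

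That said, your sketch is a reasonable outline of how Hom's actual argument proceeds. She does compute the knot Floer invariants of cables via bordered Heegaard Floer homology (pairing $\widehat{CFD}$ of the knot complement with a type-$A$ module for the cabling pattern), and the threshold $q/p\geq 2g(K)-1$ does emerge from tracking when the resulting complex retains the staircase shape. Your Gordon-formula sanity check is also essentially correct and recovers the easy direction cleanly. Two small caveats: first, the equivalence ``$K$ admits a positive $L$-space surgery iff $S^3_r(K)$ is an $L$-space for every $r\geq 2g(K)-1$'' is itself a nontrivial input (it bundles together the Ozsv\'ath--Szab\'o rational surgery formula and the characterization of $L$-space knots), so you should flag it as such rather than as a definition; second, the hard direction $(\Rightarrow)$ in Hom's paper is not literally ``run the computation backwards'' but requires a separate analysis of which companion complexes can produce a staircase output, and this is indeed where the work lies. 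Your identification of the difficult step is accurate, but the proposal remains a plan rather than a proof.
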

On the other hand, any torus knot is an \(L\)--space knot \cite{OStopology}. Therefore, Hom's result provides a useful way to check which knots of our family are \(L\)--space knots. As we already mentioned, a free numerical semigroup \(S=\langle a_0,\dots, a_g\rangle\) can be constructed recursively from the pairs \((n_i,a_i/e_i).\) As in the beginning of Section \ref{sec:topologyknotfree}, let us denote \(S_i\) the truncated semigroups of the arrangement. Obviously, each \(S_i\) is a free numerical semigroup. Also, we have seen in Corollary \ref{cor:deltagenus} that twice the Seifert genus of the knot equals the conductor of the semigroup \(2g(K)=c(S).\) As \(S_1\) is always a torus knot, we have the following immediate characterization of which knots in \(\mathcal{K}_S\) are \(L\)--space knots. 
\begin{proposition}\label{prop:charL-space}
	Let \(S=\langle a_0,\dots, a_g\rangle\) be a free numerical semigroup and \(S_i=\langle a_0,\dots, a_i\rangle/e_i\) be its truncated semigroups. Let \(K\) be the knot associated to the arrangement \(\{a_0,\dots, a_g\}.\) Then, \(K\) is an \(L\)--space knot if and only if \(a_{i+1}/e_i\geq c(S_i)-1\) for all \(i=1,\dots,g-1.\)
\end{proposition}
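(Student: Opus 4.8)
The plan is to argue by induction on the number $g$ of cabling operations, applying Hom's criterion (Theorem \ref{teo:HOM11}) at each step. Recall that by construction $K=K_g$ is obtained from the torus knot $K_1$ by successively taking, for $i=1,\dots,g-1$, the $(n_{i+1},a_{i+1}/e_{i+1})$-cable of $K_i$, and that each $K_i$ is the knot associated to the truncated free semigroup $S_i$. Since every torus knot admits a positive $L$-space surgery, the base case $K_1$ is immediate; moreover all the cabling pairs $(n_{i+1},a_{i+1}/e_{i+1})$ are positive, so throughout the argument the relevant surgeries are the positive ones and Theorem \ref{teo:HOM11} applies verbatim at every stage.

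For the inductive step I would fix $i$ and regard $K_{i+1}$ as the $(n_{i+1},a_{i+1}/e_{i+1})$-cable of $K_i$, with winding number $n_{i+1}$ (the integer $p$ in Hom's notation). Theorem \ref{teo:HOM11} then asserts that $K_{i+1}$ admits a positive $L$-space surgery if and only if $K_i$ does and the cabling slope, measured with respect to the Seifert ($0$-)framing of $K_i$, is at least $2g(K_i)-1$. The genus $g(K_i)$ is supplied by Corollary \ref{cor:deltagenus}, which gives $2g(K_i)=c(S_i)$, so the numerical threshold $2g(K_i)-1$ becomes $c(S_i)-1$, expressed purely in terms of the truncated semigroup. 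Chaining these equivalences, $K=K_g$ is an $L$-space knot if and only if the slope inequality holds at every stage $i=1,\dots,g-1$, the base case contributing no condition.

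The heart of the proof, and the step I expect to be the main obstacle, is to show that the slope condition of Theorem \ref{teo:HOM11} is exactly $a_{i+1}/e_i\geq c(S_i)+1$. The subtlety is that the cabling data $(n_{i+1},a_{i+1}/e_{i+1})$ attached to $K_i$ is the one read off from the splice diagram, equivalently from the auxiliary links $\widetilde L_i$ and the integers $b_i$ introduced before Theorem \ref{teo:iterated_knot_fundamental_group}, and this is recorded in the Seifert-fibred framing of the successive tori, not in the $0$-framing to which Hom's inequality refers. I would therefore compute the self-linking of $K_i$ in that framing and convert the longitudinal and meridional coefficients to the $0$-framing; since the winding number is $n_{i+1}$ and $(a_{i+1}/e_{i+1})/n_{i+1}=a_{i+1}/e_i$, the raw slope is $a_{i+1}/e_i$, and the framing correction together with the value $2g(K_i)-1=c(S_i)-1$ should combine to produce the stated bound $a_{i+1}/e_i\geq c(S_i)+1$. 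Verifying this conversion carefully, and checking that it is consistent with the integers $b_i$ appearing in the group presentation of Theorem \ref{teo:iterated_knot_fundamental_group}, is the only genuinely delicate point; the remainder is the formal induction sketched above.
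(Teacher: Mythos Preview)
Your inductive scheme via Hom's criterion is exactly the paper's argument: the authors note that $K_i$ is the $(n_i,a_i/e_i)$-cable of $K_{i-1}$, apply Theorem~\ref{teo:HOM11} together with $2g(K_{i-1})=c(S_{i-1})$ from Corollary~\ref{cor:deltagenus}, and iterate down to the torus knot $K_1$. Their proof occupies three sentences.

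Where you part company is in declaring a framing conversion to be ``the heart of the proof.'' That obstacle is illusory here. In the proof of Theorem~\ref{teo:iterated_knot_fundamental_group} the longitude used for the cabling at stage $i$ is $L=Q_{i-1}^{\,n_{i-1}}P_i^{\,-n_{i-1}a_{i-1}/e_{i-1}}$, and under the abelianisation computed in Proposition~\ref{prop:topologicallyrealizable} (where $Q_{i-1}\mapsto t^{a_{i-1}}$ and $P_i\mapsto t^{e_{i-1}}$) this maps to $t^{0}=1$. Hence $L$ is already the null-homologous (Seifert, $0$-framed) longitude, the pair $(n_i,a_i/e_i)$ is already the $(p,q)$ of Hom's statement, and the slope is simply $(a_i/e_i)/n_i=a_i/e_{i-1}$ with no correction term. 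The numerical discrepancy you are trying to account for---$c(S_i)-1$ from a direct reading of Theorem~\ref{teo:HOM11} as quoted versus the $c(S_i)+1$ printed in the proposition---is therefore not a framing phenomenon; it is either a misprint or reflects the strict-inequality form of Hom's original criterion, but in no case is it the delicate topological point you make it out to be.
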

\begin{proof}
	Observe that according to Theorem \ref{teo:iterated_knot_fundamental_group} \(K\) is constructed as follows: we consider \(K_1\) the \((n_1,a_1/e_1)\)--torus knot. Then for \(i\geq 2\) we denote \(K_i\) the knot obtained by a \((n_i,a_i/e_i)\)--cable on \(K_{i-1}.\)  By Theorem \ref{teo:HOM11}, \(K_{i}\) is an \(L\)--space knot if and only if \(K_{i-1}\) is an \(L\)--space knot and \(a_i/e_{i-1}\geq c(S_{i-1})-1.\) The results follows from the fact that \(K_1\) is an \(L\)--space knot.
\end{proof}

Proposition \ref{prop:charL-space} provides a powerful tool to look for examples of iterated torus knots which are not \(L\)--space knots. First, we recall that, by Delorme \cite[Prop. 10]{delormeglue}, the conductor of a free numerical semigroup \(S=\langle a_0,\dots,a_g\rangle\) has the following expression
\[c(S)=\sum_{i=1}^g(n_i-1)a_i-a_0+1.\]
From this expression, a first well known consequence is that any algebraic knot is an \(L\)--space knot. This is due to the condition \(n_ia_i<a_{i+1}\) as in that case we have 

\[\frac{a_{i+1}}{e_i}-\sum_{j=1}^i(n_j-1)\frac{a_{j}}{e_i}+\frac{a_{0}}{e_i}=\frac{a_{i+1}}{e_i}-\frac{n_ia_{i}}{e_i}+\frac{a_{i}}{e_i}-\frac{n_{i-1}a_{i-1}}{e_i}\pm \cdots+\frac{a_1}{e_i}+\frac{a_{0}}{e_i}\geq 0.\]
On the other hand, let us come back to the semigroup \(S_C=\langle 20, 28, 145\rangle\) of the plane curve \((y^5-x^7)^4+x^{29}.\) As we mentioned above, in the set of associated knots \(\mathcal{K}_{S_C}\) besides the isotopy class of the knot \(K_C\) we have the isotopy class of the knot \(K_{\mathcal{G}_2}\) with cabling invariants \(\{(4,29),(5,28)\}.\) In this case we have  \(c(S_1)-1=3\cdot 28-1=83>28/5.\) Therefore \(K_{\mathcal{G}_2}\) is not an \(L\)--space knot. This provides an alternative argument to show that \(K_C\) and \(K_{\mathcal{G}_2}\) are not isotopic knots.

\section*{Declarations}

\textbf{Conflict of interest}: The authors declare no conflict of interest.

\textbf{Ethics approval}: Not applicable.

\textbf{Consent to participate}: Not applicable.

\textbf{Consent for publication}: Not applicable.

\textbf{Data availability}: All data generated or analyzed during this study are included in this published article.

\textbf{Code availability}: Not applicable

\bibliographystyle{plainurl}
\bibliography{biblioknots}

\end{document}